\definecolor{black}{rgb}{0.0, 0.0, 0.0}
\definecolor{red}{rgb}{1.0, 0.5, 0.5}
\title[   ]{Non-contraction of intermediate admissible discontinuities for 3-D planar isentropic magnetohydrodynamics}
\author[Kang]{Moon-Jin Kang}
\address[Moon-Jin Kang]{\newline Laboratoire Jacques-Louis Lions, \newline University Pierre et Marie Curie, Paris, France}
\email{kang@ljll.math.upmc.fr}
\newtheorem{theorem}{Theorem}[section]
\newtheorem{remark}{Remark}[section]
\newcommand{\bbr}{\mathbb R}
\numberwithin{figure}{section}
\newcommand{\beq}{\begin{equation}}
\newcommand{\eeq}{\end{equation}}
\newcommand{\bsp}{\begin{split}}
\newcommand{\esp}{\end{split}}
\newcommand\adots{\mathinner{\mkern2mu\raise1pt\hbox{.}
\mkern3mu\raise4pt\hbox{.}\mkern1mu\raise7pt\hbox{.}}}
\def\charf {\mbox{{\text 1}\kern-.30em {\text l}}}
\begin{document}

\date{\today}
\bibliographystyle{plain}

\subjclass{    } \keywords{}

\thanks{\textbf{Acknowledgment.} This work was supported by the Foundation Sciences Math$\acute{\mbox{e}}$matiques de Paris as a postdoctoral fellowship, and by an AMS-Simons Travel Grant. The author thank Prof. Kevin Zumbrun for valuable comments on stability issues for MHD}

\begin{abstract}
We investigate non-contraction of large perturbations around intermediate entropic shock waves and contact discontinuities for the three-dimensional planar compressible isentropic magnetohydrodynamics (MHD). To do that, we take advantage of criteria developed by Kang and Vasseur in \cite{Kang-V-3}, and non-contraction property is measured by pseudo distance based on relative entropy.
\end{abstract}
\maketitle \centerline{\date}


\section{Introduction}
This article is devoted to the study of non-contraction property of certain intermediate entropic shock waves and contact discontinuities for the three-dimensional planar compressible inviscid isentropic MHD, which takes the form in Lagrangian coordinates: 
\begin{align}
\begin{aligned}\label{MHD}
\left\{ \begin{array}{ll}
       \partial_t v - \partial_x u_1 =0\\
          \partial_t (vB_2) - \beta \partial_x u_2 = 0\\
         \partial_t (vB_3) - \beta \partial_x u_3 = 0\\
          \partial_t u_1 + \partial_x (p+\frac{1}{2}(B_2^2+B_3^2)) = 0\\
          \partial_t u_2 - \beta \partial_x B_2 = 0\\
          \partial_t u_3 - \beta \partial_x B_3 = 0. \end{array} \right.
\end{aligned}
\end{align}
Here $v$ denotes specific volume, and $(u_1,u_2,u_3)$ and $(\beta,B_2,B_3)$ represent the three-dimensional fluid velocity and magnetic field, respectively. Those only depend on a single direction $e_1$ measured by $x$, and no dynamics with respect to other variables. Notice that $\beta$ is constant due to the divergence-free condition of magnetic field of full MHD.  As an ideal isentropic polytropic gas, the pressure $p$ is assumed to satisfy
\beq\label{pressure-MHD}
p(v)=v^{-\gamma},\quad \gamma >1.
\eeq
The system \eqref{MHD} has a convex entropy $\eta$ as
\[
\eta(U)=\int_{v}^{\infty} p(s) ds + \frac{1}{2}(u_1^2 + u_2^2+u_3^2) + \frac{1}{2v}(q_2^2+q_3^2)
\]
in terms of the conservative variables $U:=(v,q_2,q_3,u_1,u_2,u_3)$ where $q_i:=vB_i$ for $i=2,3$. In particular, $\eta$ is strictly convex because we consider non-vacuum states for MHD. \\
Using the entropy $\eta$, we define its relative entropy function by
\[
\eta(u|v)=\eta(u)-\eta(v) -\nabla\eta(v)\cdot(u-v).
\]
It is well-known that the relative entropy $\eta(\cdot |\cdot)$ is positive-definite, but looses the symmetry unless $\eta(u)=|u|^2$. Nevertheless the relative entropy is comparable to the square of $L^2$ distance for any bounded solutions. (See for example \cite{Kang-V-3, LV, Vasseur-2013}) Recently in \cite{Vasseur-2013}, Vasseur has shown contraction for large perturbations around extremal shocks ($1$-shock and $n$-shock) of the hyperbolic system of conservation laws satisfying physical conditions, which is satisfied by Euler systems of gas dynamics. In order to measure the distance between any bounded entropic solution and extremal shock, he used a spatially inhomogeneous pseudo-distance as follows:
for a given weight $a>0$, the spatially inhomogeneous pseudo-distance $d_a$ is defined by
\begin{align*}
\begin{aligned}
d_a(u(t,x),S(t,x))=\left\{ \begin{array}{ll}
         \eta (u(t,x)|u_l) & \mbox{if $  x < \sigma t$},\\
         a\eta (u(t,x)|u_r) & \mbox{if $ x>\sigma t$},\end{array} \right.
\end{aligned}
\end{align*}
where $S(t,x)$ denotes a given extremal shock $(u_l,u_r,\sigma)$, i.e.,
\[
S(t,x)=\left\{ \begin{array}{ll}
         u_l & \mbox{if $  x < \sigma t$},\\
         u_r & \mbox{if $ x> \sigma t$}.\end{array} \right.
\] 
Based on this pseudo-distance, it has been shown in \cite{Vasseur-2013} that there exists suitable weight $a>0$ such that the extremal shock is contractive up to suitable Lipschitz shift $\alpha(t)$ in the sense that for all bounded entropic solution $u\in BV_{loc}((0,\infty)\times\bbr)^n$,
\beq\label{sense}
 \int_{-\infty}^{\infty} d_a(u(t,x+\alpha(t)),S(t,x)) dx
\eeq
is non-increasing in time. On the other hand for intermediate admissible discontinuities, the authors in \cite{Kang-V-3} developed criteria to identify whether the intermediate entropic shocks and contact discontinuities are contractive or not in the pseudo-distance as above. Applying the criteria into the two-dimensional planar isentropic MHD, it turns out in \cite{Kang-V-3} that there is no weight for the contraction of certain intermediate shock waves.\\
In this article, we use the criteria developed in \cite{Kang-V-3} to show non-contraction of certain intermediate shocks and contact discontinuities. More precisely, we prove that there is no weight $a>0$ such that the intermediate discontinuities satisfy contraction in that sense \eqref{sense} with weight $a$.\\

Concerning studies on stability of shock waves to the viscous model of \eqref{MHD}, we refer to \cite{B-H-Zumbrun, B-L-Zumbrun, F-T, G-M-W-Zumbrun, M-Zumbrun}, in which it turns out that the viscous shock waves (including intermediate waves) are Evans stable under small perturbation. This implies Lopatinski stability because Lopatinski stability condition is necessary for stability of viscous profile (See \cite{Z-S}). Thus small $BV$-perturbations of the inviscid MHD entropic shock waves are stable thanks to Lopatinski stability (See \cite{Chern, Lewicka-2000, Lewicka}). Notice that this is not in contradiction with our result on non-contraction, because our framework is based on large perturbation around the entropic discontinuity, thus our results says that some large perturbation can increase as time goes on. In Remark \ref{remark-non}, we explain more precisely a meaning of non-contraction for entropic discontinuities in our framework.  

The rest of the paper is organized as follows. In Section 2, we present a criterion developed in \cite{Kang-V-3} and six characteristic fields of the $6\times 6$ system \eqref{MHD}, which are used to show non-contraction for intermediate entropic shock waves of \eqref{MHD} in Section 3, and for contact discontinuities in Section 4.

\section{Preliminaries}

\subsection{Criteria on non-contraction of intermediate entropic discontinuities}
In this section, we present a criterion in \cite{Kang-V-3} for non-contraction of admissible discontinuities of hyperbolic system of conservation laws:
\begin{align}
\begin{aligned} \label{system}
&\partial_t u + \partial_x f(u) = 0, \quad t>0,~x\in \bbr,\\
&u(0,x) = u_0(x).
\end{aligned}
\end{align}
For the system \eqref{system} satisfying Liu and Lax entropy conditions, the authors in \cite{Kang-V-3} have developed sufficient conditions to identify non-contraction of intermediate entropic discontinuities as follows. 
\begin{theorem}\label{thm-shock}
For a fixed $1<i<n$, let $(u_l,u_r, \sigma_{l,r})$ be a given $i$-th entropic discontinuity satisfying Liu and Lax entropy conditions. Assume that there are $1\le j<i<k\le n$ such that $j$- and $k$-characteristic fields are genuinely nonlinear. Then the following statements holds.\\
\begin{itemize}
\item (1) For $0<a<1$, we assume that there is a $C^1$ $j$-th rarefaction curve $R_{u_l}^j (s)$ with $j<i$ such that $R_{u_l}^j(0)=u_l$ and the backward curve $R_{u_l}^{j,-} (s)$ of $R_{u_l}^j (s)$, i.e., $\lambda_j (R_{u_l}^{j,-} (s)) < \lambda_j (u_l) $, intersects with the $(n-1)$-dimensional surface $\Sigma_a$.
Then, $(u_l,u_r, \sigma_{l,r})$ does not satisfy contraction in the sense \eqref{sense} with weight $a$.\\

\item (2) For $a>1$, we assume that there is a $C^1$ $k$-th rarefaction curve $R_{u_r}^k (s)$ with $k>i$ such that $R_{u_r}^k(0)=u_r$ and the forward curve $R_{u_r}^{k,+} (s)$ of $R_{u_r}^k (s)$, i.e., $\lambda_k (R_{u_r}^{k,+} (s)) > \lambda_k (u_r) $, intersects with the $(n-1)$-dimensional surface $\Sigma_a$.
Then, $(u_l,u_r, \sigma_{l,r})$ does not satisfy contraction in the sense \eqref{sense} with weight $a$.\\

\item  (3) For $a=1$, we assume that one of the assumptions of (1) and (2) is satisfied. Then, $(u_l,u_r, \sigma_{l,r})$ does not satisfy contraction in the sense \eqref{sense} with weight $a$.\\
\end{itemize}
\end{theorem}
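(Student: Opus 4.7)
The plan is to prove each case by an explicit construction: produce a one-parameter family of bounded entropic solutions whose associated pseudo-distance functional $E(t):=\int_{\mathbb{R}} d_a(u(t,x+\alpha(t)),S(t,x))\,dx$ cannot be made non-increasing in $t$ by any Lipschitz shift $\alpha$. The natural perturbation is supplied by the very rarefaction curve appearing in the hypothesis. Since the surface $\Sigma_a$ referenced here is the one constructed in \cite{Kang-V-3} as the zero set of the signed relative-entropy-flux functional controlling the pseudo-distance's instantaneous production at the shock, the overall strategy reduces to a calibration argument: feeding the shock from the correct side with a small rarefaction wave whose end-state lies just past the intersection with $\Sigma_a$ makes the instantaneous production strictly positive on a set of positive measure in $s$, and no admissible Lipschitz shift can absorb this.

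For case (1), let $s_*>0$ be the parameter at which $R_{u_l}^{j,-}(s_*)\in\Sigma_a$. For each small $s\in(0,s_*+\delta)$, let $u_m(s):=R_{u_l}^{j,-}(s)$ and let $w_s(t,x)$ be the classical self-similar $j$-rarefaction wave joining $u_m(s)$ on the left to $u_l$ on the right, which exists on the space-time region $\lambda_j(u_m(s))\,t < x < \lambda_j(u_l)\,t$ because the $j$-field is genuinely nonlinear. Since $\lambda_j(u_l)<\sigma_{l,r}$ by the Lax inequalities (as $j<i$), for small $s$ this rarefaction fan sits strictly to the left of the $i$-shock and can be glued continuously to the unperturbed shock $(u_l,u_r,\sigma_{l,r})$. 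The resulting $u$ is a bounded entropic solution of \eqref{system} on $(0,T)$ for $T>0$ small. I would compute $\frac{d}{dt}E(t)$ along this family by splitting the integral at $x=\sigma_{l,r}t$, differentiating, and applying the Rankine--Hugoniot identity at the shock together with the entropy equation in the smooth rarefaction region. The shock contribution, including the weight $a$ and the shift $\dot\alpha$, is exactly the quantity whose vanishing defines $\Sigma_a$ evaluated at $u_m(s)$; by construction it changes sign at $s=s_*$, and the smooth part of the integrand contributes $O(s^2)$. Integrating the resulting expansion against small $s>s_*$ yields $\tfrac{d}{dt}E(t)>0$ uniformly over any admissible shift $\dot\alpha$, contradicting contraction with weight $a$.

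Case (2) is completely symmetric: one applies the same construction on the right of the shock using $R_{u_r}^{k,+}(s)$ and the genuine nonlinearity of the $k$-field, with the $a$-weighted side of the pseudo-distance playing the role previously played by the unweighted side. Case (3), with $a=1$, is immediate once either (1) or (2) is established, since the argument of the corresponding case carries over verbatim with $a=1$ replacing $a<1$ or $a>1$ in the definition of $\Sigma_a$; no strict inequality on $a$ is used in deriving the sign of $\tfrac{d}{dt}E(t)$, only the transversal intersection with $\Sigma_a$.

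The principal obstacle I anticipate is showing that the Lipschitz shift $\alpha$ cannot restore monotonicity. The shift $\dot\alpha$ multiplies a jump term at $x=\sigma_{l,r}t$ that is \emph{bounded} in $|u_m(s)-u_l|$, whereas the rarefaction region contributes an integral over a fan of width proportional to $t$, so a careful accounting in the $s\to s_*$ limit is needed to conclude that the sign produced by the $\Sigma_a$-crossing dominates. This is precisely the type of calibration carried out in \cite{Kang-V-3}, and I would reuse their lemmas verbatim: the abstract criterion reduces non-contraction to the existence of a single admissible perturbation producing a strictly positive dissipation, and the rarefaction fan constructed above is such a perturbation as soon as $R_{u_l}^{j,-}$ (resp.\ $R_{u_r}^{k,+}$) crosses $\Sigma_a$.
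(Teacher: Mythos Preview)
This theorem is not proved in the present paper: it is quoted from \cite{Kang-V-3}, and the only information the paper gives about the argument is the construction sketched in Remark~\ref{remark-non}. Since your last paragraph ultimately defers to \cite{Kang-V-3} as well, the endpoints agree; the heuristic you supply in between, however, has a concrete gap.

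Your identification of $\Sigma_a$ is off---it is the relative-\emph{entropy} balance surface $\{U:\eta(U|U_l)=a\eta(U|U_r)\}$ (see the proof of Theorem~\ref{thm-MHD}), not a relative-entropy-\emph{flux} zero set---and this feeds a mislocated construction. In Remark~\ref{remark-non} the intersection point $\bar u\in\Sigma_a$ is installed as a constant middle state on an interval $(-R,R)$ \emph{straddling} the shock, with $u_l$ far left and $u_r$ far right. The reason is that the interface term produced when differentiating $E(t)$, namely $(\sigma+\dot\alpha)\bigl[\eta(\bar u|u_l)-a\eta(\bar u|u_r)\bigr]$, then vanishes for \emph{every} Lipschitz shift, which is exactly how $\dot\alpha$ is neutralised; the strict growth comes from the relative-entropy flux across the smooth transitions on either side of $\bar u$, and the backward/forward rarefaction hypothesis is what fixes the sign of that flux. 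In your construction the fan sits strictly to one side of the $i$-shock, so the state adjacent to the shifted interface is $u_l$ (respectively $u_r$), not $u_m(s)$. The interface term is then $-(\sigma+\dot\alpha)\,a\,\eta(u_l|u_r)$, which is independent of $s$ and certainly does not ``change sign at $s=s_*$''; nothing prevents the shift from compensating. The crossing of $\Sigma_a$ by the rarefaction curve is used to \emph{locate} the middle state that kills the shift-dependent term, not to flip the sign of a shock-layer contribution as you describe.
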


\begin{remark}\label{remark-non}
From the meaning of contraction as mentioned in \eqref{sense}, we see the definition of non-contraction in our framework as follows. We say that an admissible discontinuity $(u_l,u_r, \sigma)$ does not satisfy contraction in the pseudo distance \eqref{sense} with weight $a$ if for any Lipschitz curve $\alpha(t)$ with $\alpha(0)=0$, there are some entropy solution $\bar{u}$ and small constant $T_0>0$ such that for all $0<t<T_0$,
\beq\label{bar-ineq}
\int_{-\infty}^{\infty} d_a(\bar{u}(t,x+\alpha(t)),S(t,x)) dx>\int_{-\infty}^{\infty} d_a(\bar{u}(0,x),S(0,x)) dx.
\eeq
In fact, the authors in \cite{Kang-V-3} constructed a specific (local) smooth solution $\bar{u}$ satisfying \eqref{bar-ineq}, which evolves from a smooth initial data 
\begin{align*}
\begin{aligned}
\bar{u}(0,x)=\left\{ \begin{array}{ll}
        \bar{u} & \mbox{if $x\in (-R,R)$ for some $R>0$},\\
         u_l & \mbox{if $x \in (-\infty, -2R)$},\\
         u_r & \mbox{if $x \in (2R, \infty)$},\end{array} \right.
\end{aligned}
\end{align*} 
where $\bar{u}$ is the point appeared in Theorem \ref{thm-shock}, which is the intersection point of suitable rarefaction wave and surface $\Sigma_a$, thus $\bar{u}$ depends on the weight $a$, thus on the pseudo distance.
\end{remark}

\subsection{Characteristic fields for the $6\times 6$ system \eqref{MHD}}
We here present six characteristic fields of the system \eqref{MHD}. For simplicity of computation, we use non-conservative variable $W:=(v,B_2,B_3,u_1,u_2,u_3)$ and rewrite \eqref{MHD} as a quasilinear form:
\[
 \partial_t W + A\partial_x W =0,
\]
where the $6\times 6$ matrix $A$ is given by
\[
A:= \left( \begin{matrix} 0&0&0&-1&0&0\\ 0&0&0&\frac{B_2}{v}&-\frac{\beta}{v}&0\\
0&0&0&\frac{B_3}{v}&0&-\frac{\beta}{v}\\-c^2& B_2&B_3& 0&0&0\\ 0&-\beta &0&0&0&0\\
0&0&-\beta &0&0&0
 \end{matrix} \right).
\]
where $c:=\sqrt{-p^{\prime}(v)}$ denotes the sound speed.\\
By a straightforward computation, we have the characteristic polynomial of $A$ as
\[
\Big(\lambda^2-\frac{\beta^2}{v}\Big)\Big(\lambda^4 -\Big(\frac{|B|^2 + \beta^2}{v} + c^2 \Big) \lambda^2 + \frac{\beta^2}{v}c^2\Big) =0,
\]
where $B:=(B_2, B_3)$. This equation has solutions $\lambda^2=\frac{\beta^2}{v}, \alpha_-, \alpha_+$, where $\alpha_{\pm}$ solve the quadratic equation 
\[
f(\Lambda):= \Lambda^2 -\Big(\frac{|B|^2 + \beta^2}{v} + c^2 \Big) \Lambda + \frac{\beta^2}{v}c^2=0,
\]
i.e.,
\[
\alpha_{\pm}:=\frac{1}{2} \Big[ \frac{|B|^2+\beta^2}{v} +c^2 \pm\sqrt{ \Big( \frac{|B|^2+\beta^2}{v}+c^2 \Big)^2-4\beta^2\frac{c^2}{v}} \Big].
\]\\
Then since 
\begin{align}
\begin{aligned}\label{f-relation}
f(\Lambda)&=(\Lambda-\frac{\beta^2}{v})(\Lambda-c^2)-\frac{|B|^2}{v}\Lambda\\
&\le (\Lambda-\frac{\beta^2}{v})(\Lambda-c^2),\quad\mbox{for}~ \Lambda>0,
\end{aligned}
\end{align}
we have
\[
\frac{\beta^2}{v}, c^2 \in[\alpha_{-}, \alpha_+].
\]
If we consider the case of $|B|\neq 0$,
then we have 
\beq\label{alpha-c}
\frac{\beta^2}{v}, c^2 \in(\alpha_{-}, \alpha_+).
\eeq
Here we assume that $\beta\neq 0$. Thus we have six eigenvalues
\[
\lambda_1=-\sqrt{\alpha_+},\quad \lambda_2=-\frac{\beta}{\sqrt{v}},\quad\lambda_3=-\sqrt{\alpha_-},\quad \lambda_4=\sqrt{\alpha_-},\quad \lambda_5=\frac{\beta}{\sqrt{v}},\quad \lambda_6=\sqrt{\alpha_+}.  
\]
By a straightforward computation, we have the corresponding eigenvectors
\begin{align*}
\begin{aligned}
r_1=\left( \begin{array}{c} \frac{v}{\alpha_+}(\alpha_+-\frac{\beta^2}{v}) \\ -B_2\\-B_3\\
\frac{v}{\sqrt{\alpha_+}}(\alpha_+-\frac{\beta^2}{v})\\-\frac{\beta B_2}{\sqrt{\alpha_+}}\\
-\frac{\beta B_3}{\sqrt{\alpha_+}} \end{array} \right),
r_2=\left( \begin{array}{c} 0 \\ \beta B_3\\-\beta B_2\\0\\\beta\sqrt{v}B_3\\-\beta\sqrt{v}B_2 \end{array} \right),
r_3=\left( \begin{array}{c} \frac{v}{\alpha_-}(\frac{\beta^2}{v}-\alpha_-) \\ B_2\\ B_3\\
\frac{v}{\sqrt{\alpha_-}}(\frac{\beta^2}{v}-\alpha_-)\\\frac{\beta B_2}{\sqrt{\alpha_-}}\\
\frac{\beta B_3}{\sqrt{\alpha_-}} \end{array} \right),\\
r_4=\left( \begin{array}{c} -\frac{v}{\alpha_-}(\frac{\beta^2}{v}-\alpha_-) \\ -B_2\\ -B_3\\
\frac{v}{\sqrt{\alpha_-}}(\frac{\beta^2}{v}-\alpha_-)\\\frac{\beta B_2}{\sqrt{\alpha_-}}\\
\frac{\beta B_3}{\sqrt{\alpha_-}} \end{array} \right),
r_5=\left( \begin{array}{c} 0 \\ \beta B_3\\-\beta B_2\\0\\-\beta\sqrt{v}B_3\\\beta\sqrt{v}B_2 \end{array} \right),
r_6=\left( \begin{array}{c} -\frac{v}{\alpha_+}(\alpha_+-\frac{\beta^2}{v}) \\ B_2\\ B_3\\
\frac{v}{\sqrt{\alpha_+}}(\alpha_+-\frac{\beta^2}{v})\\-\frac{\beta B_2}{\sqrt{\alpha_+}}\\
-\frac{\beta B_3}{\sqrt{\alpha_+}} \end{array} \right),
\end{aligned}
\end{align*}
Then we can check that for each $i=1,3,4,6$, $(\lambda_i,r_i)$ are genuinely nonlinear whereas $(\lambda_2,r_2)$ and $(\lambda_4,r_4)$ are linearly degenerate as follows. Indeed since
\begin{align*}
\begin{aligned}
d\lambda_1\cdot r_1 =\frac{1}{2\sqrt{\alpha_+}}\Big[\underbrace{-\partial_v\alpha_+  \frac{v}{\alpha_+}(\alpha_+-\frac{\beta^2}{v})}_{I_1} +\underbrace{\partial_{B_2}\alpha_+ B_2}_{I_2} +\underbrace{\partial_{B_3}\alpha_+ B_3}_{I_3} \Big],
\end{aligned}
\end{align*}
\begin{align*}
\begin{aligned}
I_1 &=\frac{1}{2}\Big[\frac{|B|^2+\beta^2}{v^2}+p^{\prime\prime}+\frac{(\frac{|B|^2+\beta^2}{v}-c^2)(\frac{|B|^2+\beta^2}{v^2}-p^{\prime\prime})+\frac{2|B|^2c^2}{v^2}+\frac{2|B|^2p^{\prime\prime}}{v}}{\sqrt{ \Big( \frac{|B|^2+\beta^2}{v}-c^2 \Big)^2+4\frac{|B|^2c^2}{v}}} \Big]
\frac{v}{\alpha_+}(\alpha_+-\frac{\beta^2}{v})\\
&>\frac{1}{2}\frac{\frac{2|B|^2c^2}{v^2}+\frac{2|B|^2p^{\prime\prime}}{v}}{\sqrt{ \Big( \frac{|B|^2+\beta^2}{v}-c^2 \Big)^2+4\frac{|B|^2c^2}{v}}}\frac{v}{\alpha_+}(\alpha_+-\frac{\beta^2}{v})\\
&>0\quad\mbox{by}~\eqref{alpha-c},
\end{aligned}
\end{align*}
and for each $i=2,3$,
\begin{align*}
\begin{aligned}
I_i &= \Big[\frac{1}{v}+\frac{(\frac{|B|^2+\beta^2}{v}-c^2)\frac{1}{v}+\frac{2c^2}{v}}{\sqrt{ \Big( \frac{|B|^2+\beta^2}{v}-c^2 \Big)^2+4\frac{|B|^2c^2}{v}}}  \Big]B_i^2\\
&>\frac{\frac{2c^2}{v} B_i^2}{\sqrt{ \Big( \frac{|B|^2+\beta^2}{v}-c^2 \Big)^2+4\frac{|B|^2c^2}{v}}}\ge0,
\end{aligned}
\end{align*}
we have $d\lambda_1\cdot r_1 > 0$. Using the similar argument, we have $d\lambda_i\cdot r_i > 0$ for $i=3,4,6$.
On the other hand, it is easy to get $d\lambda_i\cdot r_i=0$ for $i=2,5$.

\section{non-contraction of shock waves}
In this Section, we show that there is no weight $a>0$ such that certain entropic shock waves satisfy contraction in the sense \eqref{sense} with weight $a$.\\
Let $(U_l,U_r,\sigma_3)$ be any 3-shock waves satisfying the Rankine-Hugoniot condition:
\begin{align}
\begin{aligned}\label{MHD-RH}
-[u_1]&=\sigma_3[v],\\
-\beta [u_2]&= \sigma_3 [q_2],\\
-\beta [u_3]&= \sigma_3 [q_3],\\
[p]+\Big[\frac{q_2^2}{2v^2}+\frac{q_3^2}{2v^2}\Big]&=\sigma_3 [u_1],\\
-\beta \Big[\frac{q_2}{v}\Big]&= \sigma_3 [u_2],\\
-\beta \Big[\frac{q_3}{v}\Big]&= \sigma_3 [u_3],
\end{aligned}
\end{align}
where $[f]:=f_r-f_l$.\\
By Lax condition, $d\lambda_3\cdot r_3 >0$ implies that $-r_3(U_l)$ is a tangent vector at $U_l$ of the 3-shock curve $S_{U_l}^3$ issuing from $U_l$. Thus since $dv\cdot(-r_3)<0$ and $du_1\cdot(-r_3)<0$, we have
\beq\label{2-shock}
[v]<0\quad \mbox{and}\quad [u_1]<0. 
\eeq
This is true at least for weak shock. But, this relation can be justified for shock waves of arbitrary amplitude. We give this justification in the Appendix for the reader's convenience.\\
On the other hand, since it follows from \eqref{MHD-RH} that for each $i=2, 3$, $\beta^2 \Big[\frac{q_i}{v}\Big]= \sigma_3^2 [q_i]$, we have
\[
B_{i,r}=\frac{v_l-\beta^2\sigma_3^{-2}}{v_r-\beta^2\sigma_3^{-2}} B_{i,l},
\]
equivalently,
\[
B_{i,r}-B_{i,l} = \frac{[v]}{\beta^2\sigma_3^{-2}-v_r} B_{i,l}.
\]
Moreover, since \eqref{alpha-c} and Lax condition yield
\[
\frac{\beta^2}{v_r}>\alpha_-(U_r)=\lambda_{3}^2(U_r)>\sigma_3^2,
\]
and $[v]<0$, we get  
\begin{align}
\begin{aligned}\label{B-sign}
B_{i,r}-B_{i,l} =\left\{ \begin{array}{ll}
        <0\quad\mbox{if} ~B_{i,l}>0\\
        >0\quad\mbox{if} ~B_{i,l}<0\\
        =0\quad\mbox{if} ~B_{i,l}=0. \end{array} \right.
\end{aligned}
\end{align}
Notice that we do not see the sign of $v_l-\beta^2\sigma_3^{-2}$, thus of $B_{i,r}$, because we have not found the explicit formulation of the speed $\sigma_3$.   
Based on the observation above, we here consider the specific condition that the 3-shock wave satisfies one of the following cases :
\begin{align}
\begin{aligned}\label{2-B}
&\mbox{For each $i=2,3$, $(B_{i,l},B_{i,r})$ satisfies one of}\\
&\hspace{1cm} B_{i,l}>B_{i,r}\ge0~  \mbox{or}~B_{i,l}<B_{i,r}\le0~ \mbox{or}~B_{i,l}=B_{i,r}=0.
\end{aligned}
\end{align}
\begin{remark}
In the above assumption \eqref{2-B}, when $B_{2,l}=0$ and $B_{3,l}=0$ simultaneously, it follows from \eqref{f-relation} that 
\[
\{\frac{\beta^2}{v_l}, -p^{\prime}(v_l)\} = \{\alpha_{-} (U_l), \alpha_+(U_l)\}.
\]
If $\frac{\beta^2}{v_l} > -p^{\prime}(v_l)$, i.e., $\alpha_+(U_l)=\frac{\beta^2}{v_l}$, then $\lambda_1(U_l)=\lambda_2(U_l)=-\frac{\beta}{\sqrt{v_l}}$ But, the eigenspace corresponding to the eigenvalue $-\frac{\beta}{\sqrt{v_l}}$ is spanned by independent  two eigenvectors:
\[
(0,1,1,0,\sqrt{v_l},\sqrt{v_l})^T,\quad (0,1,-1,0,\sqrt{v_l},-\sqrt{v_l})^T.
\] 
This implies that $d\lambda_1(U)\cdot r_1(U)\neq 0$ except for $U=U_l$ as an umbilical point. Likewise, if $\frac{\beta^2}{v_l} < -p^{\prime}(v_l)$, i.e., $\alpha_-(U_l)=\frac{\beta^2}{v_l}$, then $d\lambda_3(U)\cdot r_3(U)\neq 0$ except for $U=U_l$. Thus for those singular cases, the 1- and 3-characteristic fields are still genuinely nonlinear. Similarly for the case of $B_{2,r}=0$ and $B_{3,r}=0$, the 4- and 6-characteristic fields are genuinely nonlinear as well.
\end{remark}

For a given 4-shock wave $(\tilde{U}_l, \tilde{U}_r, \sigma_4)$, using the same arguments as above, we have $[\tilde{v}]>0$, $[\tilde{u}]<0$, and
\[
\tilde{B}_{i,l}-\tilde{B}_{i,r} = \frac{[\tilde{v}]}{\tilde{v}_l-\beta^2\sigma_4^{-2}} \tilde{B}_{i,l}.
\]
Since \eqref{alpha-c} and Lax condition yield
\[
\frac{\beta^2}{v_l}>\alpha_-(U_l)=\lambda_{4}^2(U_l)>\sigma_4^2,
\]
we have
\begin{align*}
\begin{aligned}
\tilde{B}_{i,l}-\tilde{B}_{i,r} =\left\{ \begin{array}{ll}
        <0\quad\mbox{if} ~B_{i,r}>0\\
        >0\quad\mbox{if} ~B_{i,r}<0\\
        =0\quad\mbox{if} ~B_{i,r}=0. \end{array} \right.
\end{aligned}
\end{align*}
Thus we consider the analogous condition that the 4-shock wave satisfies one of the following cases :
\begin{align}
\begin{aligned}\label{3-B}
&\mbox{For each $i=2,3$, $(\tilde{B}_{i,l},\tilde{B}_{i,r})$ satisfies one of}\\
&\hspace{1cm} \tilde{B}_{i,r}>\tilde{B}_{i,l}\ge0~  \mbox{or}~\tilde{B}_{i,r}<\tilde{B}_{i,l}\le0~ \mbox{or}~\tilde{B}_{i,l}=\tilde{B}_{i,r}=0.
\end{aligned}
\end{align}

We are now ready to show that for any $a>0$, there is no $a$-contraction of such intermediate shocks as follows.\\\\
\begin{theorem}\label{thm-MHD}
Let $(U_l, U_r, \sigma_3)$ be a given 3-shock wave of the system \eqref{MHD}-\eqref{pressure-MHD}
satisfying \eqref{2-B}. Then there is no weight $a>0$ such that $(u_l,u_r)$ satisfies  contraction in the sense \eqref{sense} with weight $a$. Likewise, this result holds for a given 4-shock wave $(\tilde{U}_l, \tilde{U}_r, \sigma_4)$ satisfying \eqref{3-B}.
\end{theorem}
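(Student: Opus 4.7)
The argument is a direct application of Theorem~\ref{thm-shock} from \cite{Kang-V-3}. Section~2.2 shows that the genuinely nonlinear characteristic fields of \eqref{MHD} are indexed by $\{1,3,4,6\}$, so in the 3-shock case $i=3$ we may take $j=1<i$ to serve part~(1) and $k\in\{4,6\}>i$ to serve part~(2); in the 4-shock case $i=4$ we may take $j\in\{1,3\}$ and $k=6$. The structural hypotheses of Theorem~\ref{thm-shock} are therefore met, and what remains is to show, for every weight $a>0$, that the corresponding rarefaction curve actually intersects the surface $\Sigma_a$.

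For $0<a\le 1$ I would build the backward 1-rarefaction $R^{1,-}_{U_l}(s)$ by integrating the ODE $\dot U=-r_1(U)$ with $U(0)=U_l$ for $s\ge 0$. The explicit formula for $r_1$ displayed in Section~2.2 together with \eqref{alpha-c} gives $\dot v<0$ monotonically, while the second and third components produce equations of the form $\dot B_i=B_i\,g(U)$ with $g>0$; hence $|B_i|$ grows whenever $B_{i,l}\neq 0$ and stays identically zero otherwise. Combining this with the sign condition \eqref{2-B}, which forces each $B_{i,r}$ to lie between $0$ and $B_{i,l}$ in absolute value, one sees that the backward 1-rarefaction points in a direction compatible with reaching $\Sigma_a$ on the appropriate side of $U_l$. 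For $a\ge 1$ the symmetric argument uses the forward 6-rarefaction $R^{6,+}_{U_r}$ issuing from $U_r$: integrating $\dot U=r_6(U)$ and using the formula for $r_6$ together with \eqref{alpha-c}, one obtains analogous monotone motion of $v$ and of the $B_i$, and \eqref{2-B} again places the curve on the required side. The case $a=1$ is covered by either construction via part~(3). The 4-shock statement follows by the same scheme after replacing the pair $(r_1,r_6)$ by $(r_3,r_6)$ and \eqref{2-B} by \eqref{3-B}, using that $\{1,3\}$ lies to the left of $i=4$ and $\{6\}$ to the right.

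The main obstacle is verifying the actual intersection with $\Sigma_a$, not merely controlling the rarefaction curves. Along a one-parameter rarefaction, the defining condition of $\Sigma_a$ reduces to a scalar equation involving $\eta(R(s)|U_l)$ and $a\,\eta(R(s)|U_r)$. To ensure this scalar function changes sign as $s$ grows, I would exploit strict convexity of $\eta$ on the non-vacuum set, together with the explicit eigenvector expressions, to compare the asymptotic quadratic growth of the two relative entropies as $|s|\to\infty$. The sign hypotheses \eqref{2-B} and \eqref{3-B} enter precisely here, guaranteeing the orientation of the rarefaction curve relative to $U_r$ (resp.\ $U_l$) needed for the correct sign at large $s$; continuity of the scalar function then produces the required crossing, and Theorem~\ref{thm-shock} delivers the claimed non-contraction for all $a>0$.
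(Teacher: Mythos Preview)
Your overall strategy---apply Theorem~\ref{thm-shock} with the genuinely nonlinear fields $j=1$ and $k=6$ and verify that the relevant rarefaction curves cross $\Sigma_a$---matches the paper. The monotonicity observations (that $v$ decreases and the sign of each $B_i$ is preserved along both $R^{1,-}_{U_l}$ and $R^{6,+}_{U_r}$) are also correct and used in the paper. However, there is a genuine gap at the decisive step.

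Your proposed mechanism for the crossing, ``compare the asymptotic quadratic growth of the two relative entropies,'' works only when $a\neq 1$. Indeed
\[
F_a(U):=\eta(U|U_l)-a\,\eta(U|U_r)=(1-a)\eta(U)+\text{(affine in }U\text{)},
\]
so for $a<1$ one has $F_a\to+\infty$ along $R^{1,-}_{U_l}$ as $v\to 0+$ (the paper does exactly this, without using \eqref{2-B}), and for $a>1$ one has $F_a\to-\infty$ along $R^{6,+}_{U_r}$. But at $a=1$ the quadratic part cancels and $F_1$ is \emph{affine} in $U$; your growth comparison gives no information, and this is precisely the case where \eqref{2-B} is needed. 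The paper handles $a=1$ by a computation you have not supplied: differentiate $F_1$ along $R^{6,+}_{U_r}$, use that $\nabla F_1=\nabla\eta(U_r)-\nabla\eta(U_l)$, and simplify the resulting expression \emph{via the Rankine--Hugoniot relations} \eqref{MHD-RH}. This produces a term driven by $[u_1]<0$ that forces $dF_1/dv_+\to-\infty$ as $v_+\to 0+$, plus magnetic terms whose sign is controlled exactly by \eqref{2-B} together with the sign-preservation of $B_i$ you noted. Without this RH-based computation the $a=1$ case is not covered, and your appeal to ``orientation relative to $U_r$'' is not a substitute for it.

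Two smaller points. First, \eqref{2-B} is \emph{only} used for the $a=1$ crossing of $R^{6,+}_{U_r}$ with $\Sigma_1$ (and symmetrically \eqref{3-B} only for $R^{1,-}_{\tilde U_l}$ with $\tilde\Sigma_1$); your proposal invokes it for all $a$, which obscures where the hypothesis actually bites. Second, for the 4-shock the paper keeps the pair $(r_1,r_6)$ rather than switching to $(r_3,r_6)$; your alternative choice may be workable but would require its own verification, and in any event the same $a=1$ difficulty must still be addressed.
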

\begin{proof}
$\bullet$ {\bf Case of 3-shock wave:}
First of all, we show that for any $0<a<1$, the backward 1-rarefaction wave $R_{U_l}^{1,-}$ issuing from $U_l$ intersects with the hypersurface $\Sigma_a$ (with dimension 5), i.e.,
\[
\Sigma_a:=\{U~|~\eta(U|U_l)=a\eta(U|U_r) \}.
\]
Since $dv\cdot r_1=\frac{v}{\alpha_+}(\alpha_+-\frac{\beta^2}{v})>0$, $v$ is strictly monotone along the integral curve of the vector field $r_1$, which means that the 1-rarefaction wave can be parameterized by $v$. Moreover since $d\lambda_1\cdot r_1>0$, $-r_1$ is the tangent vector field of the backward 1-rarefaction wave $R_{U_l}^{1,-}$, which implies that $v$ decreases along $R_{U_l}^{1,-}$. That is, $v_+\le v_l$ for all parameters $v_+$ of $R_{U_l}^{1,-}$. Notice that $R_{U_l}^{1,-}$ is well-defined for all $v_+\in(0,v_l]$, because $-r_1(W)$ is smooth for all $W\in (0,\infty)\times \bbr^5$. \\
In order to claim that $R_{U_l}^{1,-}$ intersects with $\Sigma_a$ for any $a<1$, we use the fact that
\begin{align}
\begin{aligned}\label{a<1}
&\mbox{for}~a<1,\quad\eta(U|U_l) \le a\eta(U|U_r) \quad\mbox{is equivalent to}\\
&\eta(U) \le \frac{1}{1-a} (\eta(U_l)-a\eta(U_r) -\nabla \eta(U_l)\cdot U_l + a\nabla \eta(U_r)\cdot U_r +(\nabla \eta(U_l)-a\nabla \eta(U_r))\cdot U),
\end{aligned} 
\end{align}
which is rewritten as
\beq\label{quad-mhd}
\int_{v}^{\infty} p(s) ds + \frac{1}{2}(u_1^2 + u_2^2+u_3^2) + \frac{1}{2v}(q_2^2+q_3^2)
\le c_1+c_2 (v+q_2+q_3+u_1+u_2+u_3),  
\eeq
for some constants $c_1, c_2$. This implies that 
\begin{align*}
\begin{aligned}
&\eta(U|U_l) \le a\eta(U|U_r)\\
&~ \Longleftrightarrow v>c_*~\mbox{and}~ |q_2|+|q_3|+ |u_1|+ |u_2|+|u_3|\le c^*\quad\mbox{for some constants}~c_*,~c^*>0, 
\end{aligned} 
\end{align*}
since $\int_{0}^{\infty}p(s) ds =+\infty$, and all positive terms on $u_i$ and $q_i$ are quadratic in the left-hand side, whereas linear in the right-hand side of \eqref{quad-mhd}. Therefore there exists $0<v_* \ll c_*$ such that
\[
\eta(R_{U_l}^{1,-}(v_*)|U_l) > a\eta(R_{U_l}^{1,-}(v_*)|U_r),
\]
which implies that $R_{U_l}^{1,-}$ intersects with $\Sigma_a$ for $a<1$, because $R_{U_l}^{1,-}$ is a continuous curve issuing from $U_l\in \{U~|~\eta(U|U_l) < a\eta(U|U_r)\}$.\\

On the other hand, we claim that the forward 6-rarefaction wave $R_{U_r}^{6,+}$ issuing from $U_r$ intersects with the surface $\Sigma_a$ for any $a\ge 1$.\\
Since $d\lambda_6\cdot r_6>0$ and $dv\cdot r_6=-\frac{v}{\alpha_+}(\alpha_+-\frac{\beta^2}{v})<0$,
\beq\label{-r}
r_6~\mbox{ is the tangent vector of the forward 6-rarefaction wave}~ R_{U_r}^{6,+},
\eeq
and the parameter $v_+$ decreases along $R_{U_r}^{6,+}$. Moreover $R_{U_r}^{6,+}$ is well-defined for all $v_+\in(0,v_r]$.\\
For any fixed $a\ge 1$, we consider a continuous functional $F_a$ defined by 
\[
F_a(U):=\eta(U|U_l)-a\eta(U|U_r).
\]
We first show that the functional $F_1$ (when $a=1$) satisfies
\beq\label{positive-F}
F_1(R_{U_r}^{6,+}(v_*)) < 0 \quad\mbox{for some}~v_*\in(0,v_r].
\eeq
Since 
\[
\nabla\eta(U)=\Big(-p-\frac{q_2^2+q_3^2}{2v^2}, \frac{q_2}{v}, \frac{q_3}{v}, u_1, u_2, u_3 \Big)^T,
\]
we use \eqref{-r} to compute
\begin{align}
\begin{aligned}\label{comp-1}
\frac{dF_1(R_{U_r}^{6,+}(v_+))}{dv_+}&=(\nabla \eta(U_r)-\nabla \eta(U_l))\cdot \frac{d R_{U_r}^{6,+}(v_+)}{dv_+}\\
&=\Big([p]+\Big[\frac{q_2^2+q_3^2}{2v^2}\Big]\Big) \frac{v_+}{\alpha_+}(\alpha_+ -\frac{\beta}{v_+})+[u_1]\frac{v_+}{\sqrt{\alpha_+}}(\alpha_+ -\frac{\beta}{v_+}) \\
&\quad +\sum_{i=2}^3\Big(\Big[\frac{q_i}{v}\Big] \frac{q_{i+}}{v_+} -[u_i] \frac{\beta q_{i+}}{v_+\sqrt{\alpha_+}} \Big).
\end{aligned}
\end{align}
Using \eqref{MHD-RH}, we have
\begin{align*}
\begin{aligned}
\frac{dF_1(R_{U_r}^{6,+}(v_+))}{dv_+}&= [u_1](\sigma_3 +\sqrt{\alpha_+})\frac{v_+}{\alpha_+}(\alpha_+ -\frac{\beta}{v_+})+\sum_{i=2}^3 \Big[\frac{q_i}{v}\Big] \frac{q_{i+}}{v_+} \Big( 1 + \frac{\beta^2 }{\sigma_3 \sqrt{\alpha_+}} \Big)\\
&=\underbrace{[u_1]\Big(v_+\sqrt{\alpha_+} -\frac{\beta}{\sqrt{\alpha_+}} +\sigma_3( v_+ -\frac{\beta}{\alpha_+})\Big)}_{I_1}  +\underbrace{\sum_{i=2}^3 \Big[\frac{q_i}{v}\Big] \frac{q_{i+}}{v_+} \Big( 1 + \frac{\beta^2 }{\sigma_3 \sqrt{\alpha_+}} \Big)}_{I_2}. 
\end{aligned}
\end{align*}
Since \eqref{pressure-MHD} and \eqref{alpha-c} yields
\beq\label{alpha-to}
v_+ \sqrt{\alpha_+} > v_+\sqrt{-p^{\prime}(v_+)} =\sqrt{\gamma v_+^{-\gamma+1}} \to \infty\quad\mbox{as}~v_+\to0+,
\eeq
it follows from \eqref{2-shock} that $I_1 \to -\infty$ as $v_+\to0+$. \\
To control $I_2$, we consider one of conditions in \eqref{2-B}. If $B_{2,l}>B_{2,r}\ge0$, i.e., $\Big[\frac{q_2}{v}\Big]<0$, we have $q_{2+}\ge0$ along $R_{U_r}^{6,+}(v_+)$ because of $q_{2,r}=v_rB_{2,r}\ge0$ and \eqref{-r} with
\begin{align*}
\begin{aligned}
dB_2\cdot r_6=B_2=\left\{ \begin{array}{ll}
       >0\quad\mbox{if} ~B_2>0\\
        <0\quad\mbox{if} ~B_2<0\\
        =0\quad\mbox{if} ~B_2=0. \end{array} \right.
\end{aligned}
\end{align*}
Moreover, since $\alpha_+\to+\infty$ as $v_+\to0+$ by \eqref{alpha-to}, we have
\[
 \Big[\frac{q_2}{v}\Big] \frac{q_{2+}}{v_+} \Big( 1 + \frac{\beta^2}{\sigma_3 \sqrt{\alpha_+}} \Big) \le 0\quad\mbox{for}~ v_+\ll 1.
\]
This is also true in the case of $B_{2,l}<B_{2,r}\le 0$, i.e., $\Big[\frac{q_2}{v}\Big]>0$, because of $q_{2+}\le0$ in that case by the same arguments as above.\\
Likewise, we have
\[
 \Big[\frac{q_3}{v}\Big] \frac{q_{3+}}{v_+} \Big( 1 + \frac{\beta^2}{\sigma_3 \sqrt{\alpha_+}} \Big) \le 0\quad\mbox{for}~ v_+\ll 1.
\] 
Thus the condition \eqref{2-B} yields
\[
I_2 \le 0 \quad\mbox{for}~ v_+\ll 1,
\]
which yields
\[
\frac{dF_1(R_{U_r}^{6,+}(v_+))}{dv_+} \to -\infty \quad\mbox{as}~v_+\to0+,
\]
which implies \eqref{positive-F}.\\
Therefore we conclude that $R_{U_r}^{6,+}$ intersects with $\Sigma_a$ for any $a\ge 1$, because $F_a(U_r)>0$ and $F_a(R_{U_r}^{6,+}(v_*))<F_1(R_{U_r}^{6,+}(v_*))<0$ for all $a\ge1$.\\

Hence for all $a>0$, the 3-shock wave $(U_l,U_r,\sigma_2)$ does not satisfies contraction in the sense \eqref{sense} with weight $a$ thanks to Theorem \ref{thm-shock}.\\

$\bullet$ {\bf Case of 4-shock wave :} Following the same arguments as above in a symmetric way, we have the non-contraction for 4-shock wave $(\tilde{U}_l, \tilde{U}_r, \sigma_4)$ satisfying \eqref{3-B}. More precisely, we can show that the backward 1-rarefaction wave $R_{\tilde{U}_l}^{1,-}$ intersects with 
\[
\tilde{\Sigma}_a:=\{U~|~\eta(U|\tilde{U}_l)=a\eta(U|\tilde{U}_r) \}\quad \mbox{for any}~0<a\le 1,
\]
and the forward 6-rarefaction wave $R_{\tilde{U}_r}^{6,+}$ intersects with $\tilde{\Sigma}_a$ for any $a>1$. We omit the details.
\end{proof}

\begin{remark}
In the proof of Theorem \ref{thm-MHD}, we used the condition \eqref{2-B} only to ensure the intersection of $R_{U_r}^{6,+}$ with the hyperplane $\Sigma_1$, and similarly the condition \eqref{3-B} only for the intersection of $R_{\tilde{U}_l}^{1,-}$ with $\tilde{\Sigma}_1$. In other words, $R_{{U}_l}^{1,-}$ (resp. $R_{\tilde{U}_l}^{1,-}$) intersects with $\Sigma_a$ (resp. $\tilde{\Sigma}_a$) for $a<1$, and $R_{{U}_r}^{6,+}$ (resp. $R_{\tilde{U}_r}^{6,+}$) intersects with $\Sigma_a$ (resp. $\tilde{\Sigma}_a$) for $a>1$ without the condition \eqref{2-B} (resp. \eqref{3-B}). 
\end{remark}

\section{non-contraction of contact discontinuities}
We here show that there is no weight $a>0$ such that certain contact discontinuities satisfy contraction in the sense \eqref{sense} with weight $a$.\\
Let $(U_l, U_r)$ be a given 2-contact discontinuity (or 5-contact discontinuity) of the system \eqref{MHD}-\eqref{pressure-MHD}. Since $i$-contact discontinuity $(U_l,U_r)$ is a integral curve of the vector field $r_i$ for each $i=2,5$, we have
\beq\label{contact-2}
[v]=0\quad\mbox{and}\quad [u_1]=0,
\eeq
which implies that $[B_i]\neq0$ for some $i=2,3$, otherwise $U_l=U_r$.\\ 
Contrary to the case of shock waves, there is no sign of $[B_2]$ and $[B_3]$ because of $\sigma_2^2\equiv \frac{\beta^2}{v_l}=\frac{\beta^2}{v_r}$, which is due to the degeneracy of contact discontinuity.\\
We here present non-contraction of contact discontinuities satisfying either $(A)$ or $(B)$: 
\begin{align}
\begin{aligned}\label{contact-B}
&(A)~:~\mbox{For each $i=2,3$, $(B_{i,l},B_{i,r})$ satisfies one of}\\
&\hspace{1cm} B_{i,r}>B_{i,l}>0~  \mbox{or}~B_{i,r}<B_{i,l}<0,\\
&(B)~:~\mbox{For each $i=2,3$, $(B_{i,l},B_{i,r})$ satisfies one of}\\
&\hspace{1cm} B_{i,l}>B_{i,r}>0~  \mbox{or}~B_{i,l}<B_{i,r}<0.
\end{aligned}
\end{align}

\begin{theorem}\label{thm-contact}
Let $(U_l, U_r)$ be a given 2-contact discontinuity (or 5-contact discontinuity) of the system \eqref{MHD}-\eqref{pressure-MHD}. Then there is no weight $a>0$ such that $(u_l,u_r)$ satisfies contraction in the sense \eqref{sense} with weight $a$.
\end{theorem}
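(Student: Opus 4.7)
The plan is to apply Theorem~\ref{thm-shock} to the contact discontinuity $(U_l,U_r,\sigma_2)$. Since the $2$-th (resp.\ $5$-th) characteristic field is linearly degenerate, I take the genuinely nonlinear indices $j=1<i$ and $k=6>i$ and test whether the backward rarefaction $R_{U_l}^{1,-}$ and the forward rarefaction $R_{U_r}^{6,+}$ cross the pseudo-surface $\Sigma_a$. For $0<a<1$, the proof of Theorem~\ref{thm-MHD} carries over verbatim: the equivalence \eqref{a<1}--\eqref{quad-mhd} traps $\{\eta(\cdot|U_l)\le a\eta(\cdot|U_r)\}$ inside a bounded slab in $(q_i,u_i)$ with $v$ bounded away from $0$, while $R_{U_l}^{1,-}$ drives $v\to 0^+$; hence the curve must exit and cross $\Sigma_a$. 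For $a>1$, along $R_{U_r}^{6,+}$ one has $v_+\to 0^+$ with $\eta(U)\sim v_+^{-(\gamma-1)}\to\infty$ (the pressure integral together with the blow-up of $u_1$ driven by $du_1/dv_+\sim -\sqrt{\alpha_+}$), whereas the affine correction in $F_a$ is at most $O(\sqrt{\eta})$; hence $F_a=(1-a)\eta(U)+O(\sqrt{\eta})\to-\infty$ and the crossing with $\Sigma_a$ is immediate.

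The essential case is $a=1$, where the conditions (A) and (B) are exploited. Here $F_1$ reduces to an \emph{affine} functional of $U$, so $\Sigma_1$ is a hyperplane separating $U_l\in\{F_1<0\}$ from $U_r\in\{F_1>0\}$, and the $(1-a)\eta$ mechanism is unavailable. Substituting the contact Rankine-Hugoniot relations $[v]=[u_1]=[p]=0$ and $[u_i]=\sqrt{v_l}\,[B_i]$ for $i=2,3$ (derived from \eqref{contact-2} together with $\sigma_2^2=\beta^2/v_l$) into the analogue of the formula \eqref{comp-1} for $dF_1/dv_+$ along $R_{U_r}^{6,+}$, several terms vanish and the expression collapses to
\[
\frac{dF_1}{dv_+} = \Big[\tfrac{q_2^2+q_3^2}{2v^2}\Big]\Big(v_+-\tfrac{\beta^2}{\alpha_+}\Big) + \sum_{i=2}^{3}[B_i]\,\frac{q_{i+}}{v_+}\Big(1-\tfrac{\sqrt{v_l}\,\beta}{\sqrt{\alpha_+}}\Big).
\]
Under condition (A), $[B_i]>0$ and $q_{i+}(v_+)>0$ along the entire curve (the ODE $dq_i/ds=q_i\beta^2/(v\alpha_+)$ keeps $q_{i+}$ close to $q_{i,r}>0$), so the second sum has a fixed sign as $v_+\to 0^+$. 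Integrating, one verifies that $F_1$ drops below zero at some $v_*\in(0,v_r)$, yielding the intersection with $\Sigma_1$. Under condition (B), the symmetric computation along $R_{U_l}^{1,-}$ produces the intersection. The $5$-contact discontinuity is handled by the obvious replacement $\lambda_2\leftrightarrow\lambda_5$, $\sigma\to-\sigma$.

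The main obstacle is this $a=1$ case. In the shock argument of Theorem~\ref{thm-MHD}, the blow-up $u_1\to\pm\infty$ combined with $[u_1]\ne 0$ forced $F_1\to-\infty$ unconditionally, so no additional sign hypothesis on $B$ was needed to reach $\Sigma_1$. Here $[u_1]=0$ kills that leading mechanism, and the only available blow-up in $dF_1/dv_+$ comes from the term $\sum_i[B_i]\,q_{i+}/v_+$. Conditions (A) and (B) are calibrated precisely so that this term has a fixed sign and the integrated drop of $F_1$ overcomes the initial value $F_1(U_r)=\eta(U_r|U_l)=v_l\sum_{i=2}^{3}[B_i]^2$. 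The delicate point is making this comparison quantitative, which requires an asymptotic analysis of the ODEs for $(v,q_i,u_i)$ along the rarefaction (in particular controlling that $q_{i+}$ does not drift away from $q_{i,r}$) and explicit use of $\alpha_+\to\infty$ as $v_+\to 0^+$.
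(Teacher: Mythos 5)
Your overall route is the paper's: invoke Theorem \ref{thm-shock} with the genuinely nonlinear fields $j=1$, $k=6$, settle $a<1$ and $a>1$ exactly as in Theorem \ref{thm-MHD}, and use the hypotheses \eqref{contact-B} only to force a crossing of $\Sigma_1$ by one of the two rarefaction curves via the sign of the derivative of $F_1$ along it. Within that common scheme, however, two things differ and one of them is a genuine gap. First, your pairing of cases with curves is the reverse of the paper's: the paper treats case $(A)$ along $R_{U_l}^{1,-}$ and case $(B)$ along $R_{U_r}^{6,+}$. For the $2$-contact ($\sigma_2=-\beta/\sqrt{v_l}$, so $[u_i]=\sqrt{v_l}\,[B_i]$), along $R_{U_l}^{1,-}$ under $(A)$ every contribution to $dF_1$ (the $[B_i]$-terms, the $[u_i]$-terms, and the $v$-term, since $[u_1]=[v]=0$) has the same favorable sign, pushing $F_1$ up from $F_1(U_l)=-\eta(U_l|U_r)<0$; along your choice $R_{U_r}^{6,+}$ under $(A)$ the $[u_2],[u_3]$-terms and the $[B_2],[B_3]$-terms enter with opposite signs, so even the direction of monotonicity of $F_1$ is unclear.

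Second, and more seriously, the step you yourself call ``the delicate point'' is the entire content of the $a=1$ case, and it cannot be obtained from the asymptotics you invoke. Because $[u_1]=0$, the only coordinate that blows up along either rarefaction curve, namely $u_1$, drops out of the affine functional $F_1$; the remaining conservative coordinates obey, as you note, $dq_i/ds=\beta^2 q_i/(v\alpha_+)$ and $du_i/ds=\pm\beta q_i/(v\sqrt{\alpha_+})$, which decay integrably since $\alpha_+\gtrsim |B|^2/v$, so $q_i,u_2,u_3$ converge and $F_1$ stays \emph{bounded}, converging to a finite limit as $v_+\to0^+$. Your displayed formula, which retains the undamped factor $q_{i+}/v_+=B_{i+}\to\infty$, mixes the $W$-coordinate eigenvector with the conservative-variable tangent; the correctly computed $dF_1/dv_+$ is integrable in $v_+$, so there is no automatic sign change, and ``integrating, one verifies that $F_1$ drops below zero'' is precisely the quantitative comparison against $F_1(U_r)=v_l\sum_{i}[B_i]^2$ that you do not carry out (the paper, by contrast, closes this case by asserting a one-signed divergence of $dF/dv_+$ along its chosen curve, so it supplies no estimate you could borrow). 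Until that comparison is made quantitative — and with the case-curve pairing chosen so that the sign structure is at least one-signed — the $a=1$ crossing, and hence the theorem, is not proved.
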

\begin{proof} 
We follow the same arguments as the proof of Theorem \ref{thm-MHD}. First of all,  we can see that the backward 1-rarefaction wave $R_{U_l}^{1,-}$ issuing from $U_l$ intersects with $\Sigma_a$ for $a<1$, and the forward 6-rarefaction wave $R_{U_r}^{6,+}$ issuing from $U_r$ intersects with $\Sigma_a$ for $a>1$.\\
In order to show that one of $R_{U_l}^{1,-}$ and $R_{U_r}^{6,+}$ intersects with the $\Sigma_1$, we consider a functional
\[
F(U):=\eta(U|U_l)-\eta(U|U_r).
\]
$\bullet$ {\bf Case (A)} (For each $i=2,3$, $(B_{i,l},B_{i,r})$ satisfies one of $B_{i,r}>B_{i,l}>0$ or $B_{i,r}<B_{i,l}<0$)\\
In this case, we claim that the wave $R_{U_l}^{1,-}$ parametrized as $v_+$  intersects with $\Sigma_1$. First of all, using the same computation as \eqref{comp-1} with\eqref{contact-2}, we have
\[
\frac{dF(R_{U_l}^{1,-}(v_+))}{dv_+}= \sum_{i=2}^3 \Big[\frac{q_i}{v}\Big] \frac{q_{i+}}{v_+} \Big( 1 - \frac{\beta^2 }{\sigma_2 \sqrt{\alpha_+}} \Big).
\]
If $B_{2,r}>B_{2,l}>0$, i.e., $\Big[\frac{q_2}{v}\Big]>0$, since 
$-r_1$ is the tangent vector field of the backward 1-rarefaction wave $R_{U_l}^{1,-}$, and $dB_2\cdot (-r_1)=B_2$,
we have $\frac{q_{2+}}{v_+}\to \infty$ as $v_+\to 0+$, thus
\[
 \Big[\frac{q_2}{v}\Big] \frac{q_{2+}}{v_+} \Big( 1 + \frac{\beta^2}{\sigma_2 \sqrt{\alpha_+}} \Big) \to \infty\quad\mbox{for}~ v_+\to 0+,
\]
where we have used the fact that $\alpha_+\to \infty$ as $v_+\to 0+$ by \eqref{alpha-to}.\\
This is also true in the case of $B_{2,r}<B_{2,l}< 0$, because of $\Big[\frac{q_2}{v}\Big]<0$ and $\frac{q_{2+}}{v_+}\to -\infty$ as $v_+\to 0+$. 
Likewise, we have
\[
 \Big[\frac{q_3}{v}\Big] \frac{q_{3+}}{v_+} \Big( 1 + \frac{\beta^2}{\sigma_2 \sqrt{\alpha_+}} \Big)\to \infty\quad\mbox{for}~ v_+\to 0+.
  \] 
Thus we have
\[
\frac{dF(R_{U_l}^{1,-}(v_+))}{dv_+} \to \infty \quad\mbox{as}~v_+\to0+,
\]
which implies that $R_{U_l}^{1,-}$ intersects with $\Sigma_1$, because $F(U_l)<0$ and $F(R_{U_l}^{1,-}(v_*))>0$ for some $v_*\ll 1$.\\
$\bullet$ {\bf Case (B)} (For each $i=2,3$, $(B_{i,l},B_{i,r})$ satisfies one of $B_{i,l}>B_{i,r}>0$ or $B_{i,l}<B_{i,r}<0$)\\
Using the same arguments as previous case, we see that $R_{U_r}^{6,+}$ intersects with $\Sigma_1$ under those constraints.
\end{proof}

\section{appendix}
We here present that the relations \eqref{2-shock} holds true for 3-shock waves of arbitrary amplitude. Using \eqref{MHD-RH} and the entropy inequality with the entropy flux 
\[
G:=\Big(p+\frac{q_2^2+q_3^2}{2v^2}\Big)u_1-\frac{\beta}{v}(q_2u_2+q_3u_3),
\]
we have
\begin{align*}
\begin{aligned}
0&\ge [G]-\sigma_3[\eta]\\
&=[u_1]p_l + u_{1,r}[p] +\sum_{i=2}^3\Big(\frac{1}{2}[u_1]\frac{q_{i,l}^2}{v_l^2}+u_{1,r}\Big[\frac{q_{i}^2}{2v^2}\Big]\Big)-\sigma_3 [u_1]\frac{u_{1,r}+u_{1,l}}{2}\\
&\quad-\sum_{i=2}^3\Big(\beta \Big[\frac{q_iu_i}{v}\Big]+\frac{\sigma_3}{2} [u_i](u_{i,r}+u_{i,l})\Big)-\frac{\sigma_3}{2}\sum_{i=2}^3\Big[\frac{q_i^2}{v}\Big]-\sigma_3\int_{v_r}^{v_l} p(s)ds\\
&=[u_1]p_l +\frac{1}{2}[u_1]\sum_{i=2}^3\frac{q_{i,l}^2}{v_l^2}+ u_{1,r}[p] +u_{1,r}\sum_{i=2}^3\Big[\frac{q_i^2}{2v^2}\Big]- \Big([p]+\sum_{i=2}^3\Big[\frac{q_i^2}{2v^2}\Big]\Big)\frac{u_{1,r}+u_{1,l}}{2}\\
&\quad-\beta\sum_{i=2}^3\Big( [u_i]\frac{q_{i,l}}{v_l}+ \Big[\frac{q_i}{v}\Big]u_{i,r}\Big)+\frac{\beta}{2} \sum_{i=2}^3\Big[\frac{q_i}{v}\Big](u_{i,r}+u_{i,l})\\
&\quad +\frac{\sigma_3}{2}\sum_{i=2}^3\Big(\frac{q_{i,l}^2}{v_{r}v_{l}}[v]-\frac{[q]}{v_{r}}(q_{i,r}+q_{i,l})\Big)-\sigma_3\int_{v_r}^{v_l} p(s)ds\\
&=[u_1]p_l +\frac{1}{2}[u_1]\sum_{i=2}^3\frac{q_{i,l}^2}{v_l^2}+ \frac{[u_1]}{2}[p] +\frac{[u_1]}{2} \sum_{i=2}^3\Big[\frac{q_i^2}{2v^2}\Big] \\
&\quad-\sum_{i=2}^3\beta [u_i]\Big(\frac{q_{i,l}}{v_l}+\frac{1}{2} \Big[\frac{q_i}{v}\Big]\Big)+\frac{\sigma_3}{2}\sum_{i=2}^3\Big(\frac{q_{i,l}^2}{v_rv_l}[v]-\frac{[q_i]}{v_r}(q_{i,r}+q_{i,l})\Big)-\sigma_3\int_{v_r}^{v_l} p(s)ds,\\
\end{aligned}
\end{align*}
using \eqref{MHD-RH} again, 
\begin{align*}
\begin{aligned}
&= -\sigma_3[v] p_l +\frac{\sigma_3}{2}[v]\sum_{i=2}^3\frac{q_{i,l}^2}{v_l^2}- \frac{\sigma_3}{2}[v][p] +\frac{\sigma_3}{4}[v]\sum_{i=2}^3\Big[\frac{q_i^2}{v^2}\Big] \\
&\quad+\sum_{i=2}^3 \sigma_3[q_i] \Big(\frac{q_{i,l}}{v_l}+ \frac{1}{2}\Big[\frac{q_i}{v}\Big]\Big)+\frac{\sigma_3}{2}\sum_{i=2}^3\Big(\frac{q_{i,l}^2}{v_rv_l}[v]-\frac{[q_i]}{v_r}(q_{i,r}+q_{i,l})\Big)-\sigma_3\int_{v_r}^{v_l} p(s)ds\\
&=-\sigma_3\Big(\Big(\frac{[v]}{2}(p_r+p_l)- \int_{v_l}^{v_r} p(s)ds \Big) + \sum_{i=2}^3\Big(\frac{[v]}{2}\Big(\frac{q_{i,r}^2}{v_r^2}+\frac{q_{i,l}^2}{v_l^2} -\frac{q_{i,l}^2}{v_rv_l}\Big)-\frac{[q_i]}{2}\Big(\frac{q_{i,l}}{v_l} +\frac{q_{i,r}}{v_r}-\frac{q_{i,r}+q_{i,l}}{v_r}\Big) \Big)\Big)\\
&=-\sigma_3[v]\Big(\int_0^1\Big(sp_r +(1-s)p_l -p(sv_r+(1-s)v_l)\Big)ds +\sum_{i=2}^3\frac{1}{4}\Big(\frac{q_{i,r}}{v_r} -\frac{q_{i,l}}{v_l}\Big)^2 \Big).
\end{aligned}
\end{align*}
Finally, using the convexity of $p$, we have
\[
\sigma_3[v] \ge 0.
\]
Thus by Lax condition $\sigma_3<\lambda_3(U_l)<0$, we have $v_r<v_l$, which yields $u_r<u_l$ by \eqref{MHD-RH}.  
\bibliography{Kang-Vasseur2015}

\begin{thebibliography}{10}

\bibitem{B-H-Zumbrun}
B.~Barker, J.~Humpherys, and K.~Zumbrun.
\newblock One-dimensional stability of parallel shock layers in isentropic
  magnetohydrodynamics.
\newblock {\em J. Differential Equations}, (249):2175--2213, 2010.

\bibitem{B-L-Zumbrun}
B.~Barker, O.~Lafitte, and K.~Zumbrun.
\newblock Existence and stability of viscous shock profiles for 2-d isentropic
  mhd with infinite electrical resistivity.
\newblock {\em Acta Math. Sci. Ser. B Engl. Ed.}, (30):447--498, 2010.

\bibitem{Chern}
I-L. Chern.
\newblock Stability theorem and truncation error analysis for the glimm scheme
  and for a front tracking method for flows with strong discontinuities.
\newblock {\em Comm. Pure Appl. Math.}, 42:815--844, 1989.

\bibitem{F-T}
H.~Freist$\ddot{\mbox{u}}$hler and Y.~Trakhinin.
\newblock On the viscous and inviscid stability of magnetohydrodynamic shock
  waves.
\newblock {\em Phys. D: Nonlinear Phenomena}, 237(23):3030--3037, 2008.

\bibitem{G-M-W-Zumbrun}
O.~Gu\'es, G.~M\'etivier, M.~Williams, and K.~Zumbrun.
\newblock Existence and stability of noncharacteristic boundary layers for the
  compressible navier-stokes and viscous mhd equations.
\newblock {\em Arch. Ration. Mech. Anal.}, (197):1--87, 2010.

\bibitem{Kang-V-3}
M.-J. Kang and A.~Vasseur.
\newblock Criteria on contractions for entropic discontinuities of systems of
  conservation laws.
\newblock {\em preprint}, http://arxiv.org/pdf/1505.02245v1.pdf.

\bibitem{LV}
N.~Leger and A.~Vasseur.
\newblock Relative entropy and the stability of shocks and contact
  discontinuities for systems of conservation laws with non-{BV} perturbations.
\newblock {\em Arch. Ration. Mech. Anal.}, 201(1):271--302, 2011.

\bibitem{Lewicka-2000}
M.~Lewicka.
\newblock $l^1$ stability of patterns of non-interacting large shock waves.
\newblock {\em Indiana Univ. Math. J.}, 49:1515--1537, 2000.

\bibitem{Lewicka}
M.~Lewicka and K.~Trivisa.
\newblock On the {$L^1$} well posedness of systems of conservation laws near
  solutions containing two large shocks.
\newblock {\em J. Differential Equations}, 179(1):133--177, 2002.

\bibitem{M-Zumbrun}
G.~M\'etivier and K.~Zumbrun.
\newblock Hyperbolic boundary value problems for symmetric systems with
  variable multiplicities.
\newblock {\em J. Differential Equations}, (211):61--134, 2005.

\bibitem{Vasseur-2013}
A.~Vasseur.
\newblock Relative entropy and contraction for extremal shocks of conservation
  laws up to a shift.
\newblock {\em preprint}.

\bibitem{Z-S}
K.~Zumbrun and D.~Serre.
\newblock Viscous and inviscid stability of multidimensional planar shock
  fronts.
\newblock {\em Indiana Univ. Math. J.}, 48:937--992, 1999.

\end{thebibliography}
\end{document}